\documentclass[12pt]{article}

\usepackage{amsmath}
\usepackage{amssymb}
\usepackage{amsthm}
\usepackage{latexsym}
\usepackage{cite}
\usepackage{psfrag}
\usepackage{epsfig}
\usepackage{graphicx}
\usepackage[latin1]{inputenc}

\textwidth160mm \textheight205mm \oddsidemargin0mm

\newtheorem{theorem}{Theorem}[section]
\newtheorem{lemma}[theorem]{Lemma}
\newtheorem{corollary}[theorem]{Corollary}
\newtheorem{proposition}[theorem]{Proposition}
\theoremstyle{definition}
\newtheorem{definition}[theorem]{Definition}
\newtheorem{remark}[theorem]{Remark}

\makeatletter \@addtoreset{equation}{section} \makeatother

\def\R{\mathcal{R}}
\def\S{\mathcal{S}}
\def\PG{\mathrm{PG}}

\begin{document}

\title{Upper bounds on the smallest size of a saturating set in
projective planes and spaces of even dimension\thanks{The research of  D. Bartoli, M. Giulietti, S. Marcugini, and F.~Pambianco was
 supported in part by Ministry for Education, University
and Research of Italy (MIUR) (Project ``Geometrie di Galois e
strutture di incidenza'')
 and by the Italian National Group for Algebraic and Geometric Structures and their Applications
 (GNSAGA - INDAM).
 The research of A.A.~Davydov was carried out at the IITP RAS at the expense of the Russian
Foundation for Sciences (project 14-50-00150).}}
\date{}
\maketitle
\begin{center}
{\sc Daniele Bartoli}\\
{\sc\small Dipartimento di Matematica  e Informatica,
   Universit{\`a} degli Studi di Perugia}\\
{\sc\small Perugia, 06123, Italy}\\ \emph {E-mail address:} daniele.bartoli@unipg.it\medskip\\
{\sc Alexander A. Davydov}\\
{\sc\small Institute for Information Transmission Problems (Kharkevich institute)}\\
 {\sc\small Russian Academy of
 Sciences}\\ {\sc\small GSP-4, Moscow, 127994, Russian Federation}\\\emph {E-mail address:} adav@iitp.ru\medskip\\
 {\sc Massimo Giulietti, Stefano Marcugini, Fernanda Pambianco}\\
 {\sc\small Dipartimento di Matematica  e Informatica,
  Universit{\`a} degli Studi di Perugia}\\
 {\sc\small Perugia, 06123, Italy}\\
 \emph{E-mail address:} massimo.giulietti, stefano.marcugini, fernanda.pambianco@unipg.it
\end{center}
\begin{abstract}
In a projective plane $\Pi _{q}$ (not necessarily Desarguesian)
of order $q,$ a point subset $\S$ is saturating (or dense) if
any point of $\Pi _{q}\setminus \S$ is collinear with two points
in$~\S$. Modifying an approach of \cite{Nagy}, we proved the following upper bound
on the smallest size $ s(2,q)$ of a saturating set in $\Pi
_{q}$:
\begin{equation*}
s(2,q)\leq \sqrt{(q+1)\left(3\ln q+\ln\ln q +\ln\frac{3}{4}\right)}+\sqrt{\frac{q}{3\ln q}}+3.
\end{equation*}
The bound holds for all $q$, not necessarily large.

By using inductive constructions,
 upper bounds on the smallest size
of a saturating set in the projective space $\PG(N,q)$ with even dimension $N$ are obtained.

All the results are also stated in terms of linear covering codes.
\end{abstract}

\section{Introduction}

We denote by $\Pi _{q}$ a projective plane (not necessarily
Desarguesian) of order $q$ and by $\PG(2,q)$ the projective
plane over the Galois field with $q$ elements.

\begin{definition}
\label{def1_usual satur} A point set $\S\subset \Pi _{q}$ is
\emph{saturating} if any point of $\Pi _{q}\setminus \S$ is
collinear with two points in $\S$.
\end{definition}

Saturating sets are considered, for example, in \cite
{Bartocci,BDGMP_SatSetArxiv,BDGMP_SatSet_Petersb,BFMP-JG2013,BorSzTic,BrPlWi,DavCovCodeSatSetIEEE1995,DavCovRad2,DGMP_CovCodeNonBin_Pamporovo,%
DGMP_CovCodeR23_Petersb2008,DGMP-AMC,DMP-JCTA2003,DavOst-EJC,DavOst-IEEE2001,FainaGiul,GacsSzonyi,Giul-plane,Giul2013Survey,GiulTor,Janwa1990,Kiss_Cayley,%
Kovacs,MP_Austr2003,Nagy,Ughi};
see also the references therein. It should be noted that
saturating sets are also called ``saturated
sets'' \cite
{DavCovCodeSatSetIEEE1995,DavCovRad2,Janwa1990,Kovacs,Ughi}, ``spanning
sets'' \cite{BrPlWi}, ``dense sets''
\cite{Bartocci,BorSzTic,FainaGiul,GacsSzonyi,Giul-plane,GiulTor}, and ``1-saturating sets''
\cite{DGMP_CovCodeNonBin_Pamporovo,DGMP_CovCodeR23_Petersb2008,DGMP-AMC,DMP-JCTA2003,DavOst-EJC}.

A particular kind of saturating sets in a projective plane is \emph{complete arcs}.  An arc is a set
of points no three of which are collinear. An arc is said to be complete if it cannot be extended to a large arc;
see \cite{BDFKMP-PIT2014,BDFKMP_ComplArc_JG2016,BFMP-JG2013,FainaGiul,Giul2013Survey,KV} and the references therein.

The homogeneous coordinates of the points of a saturating set
of size $k$ in $PG(2,q)$ form a parity check matrix of a
$q$-ary linear code with length $k,$ codimension 3, and
covering radius 2. For an introduction to covering codes see
\cite{Handbook-coverings,CHLS-bookCovCod}. An online
bibliography on covering codes is given in \cite{LobstBibl}.

The main problem in this context is to find small saturating
sets (i.e. short covering codes).

Denote by $s(2,q)$  \emph{the
smallest size of a saturating set in} $\Pi _{q}$.

Let $s_D(2,q)$ be
the smallest size of a saturating set in the Desarguesian plane $\PG(2,q)$.

Let $t_2(2,q)$ be
the smallest size of a complete arc in  $\PG(2,q)$.

Clearly,
\begin{align*}
    s_D(2,q)\le t_2(2,q).
\end{align*}

The trivial lower bound is
\begin{align*}
    s(2,q),s_D(2,q),t_2(2,q)>\sqrt{2q}+1.
\end{align*}

Saturating sets in $\PG(2,q)$ obtained by
\emph{algebraic constructions or computer search} can be found in
\cite
{Bartocci,BorSzTic,BFMP-JG2013,BrPlWi,DavCovCodeSatSetIEEE1995,DavCovRad2,DGMP_CovCodeNonBin_Pamporovo,DGMP_CovCodeR23_Petersb2008,%
DGMP-AMC,DMP-JCTA2003,DavOst-EJC,DavOst-IEEE2001,FainaGiul,Giul-plane,Giul2013Survey,GiulTor,Kiss_Cayley,MP_Austr2003,SzonyiPhD,SzonyiSurvey,Ughi}.

For  $\PG(2,q)$ with $q$ non-prime, in the literature there are a few algebraic constructions
of relatively small saturating sets providing, for instance, the following upper bounds:
\begin{align*}
&   s_D(2,q)<3\sqrt{q}-1&&if&&q=(q')^2&\mbox{\cite{DavCovCodeSatSetIEEE1995}};\\
&s_D(2,q)<2\sqrt{q}+2\sqrt[4]{q}+2&&if&&q=(q')^4&\mbox{\cite{DGMP_CovCodeNonBin_Pamporovo,DGMP_CovCodeR23_Petersb2008,DGMP-AMC,Kiss_Cayley}};\\
&s_D(2,q)<2\sqrt{q}+2\sqrt[3]{q}+2\sqrt[6]{q}+2&&if&&q=(q')^6,~q'~\mbox{prime},~q'\le73&\mbox{\cite{DGMP_CovCodeNonBin_Pamporovo,DGMP_CovCodeR23_Petersb2008,DGMP-AMC}};\\
&s_D(2,q)<2\sqrt[m]{q^{m-1}}+\sqrt[m]{q}&&if&&q=(q')^m,~m\ge2&\text{\cite{DavOst-EJC,Giul-plane}}.
\end{align*}

Saturating sets of size approximately $Cq^\frac{3}{4}$, with $C$ a constant independent on $q$, have been explicitly described in several
papers; see \cite{Bartocci,BorSzTic,GiulTor,SzonyiPhD,SzonyiSurvey}.

In \cite{Giul-plane}, algebraic constructions of saturating sets in
$\PG(2,q)$ of size about $3q^{ \frac{2}{3}}$ are proposed
and the following bounds are obtained (here $p$ is prime):
\begin{align}\label{eq1_Giul2007}
&s_D(2,q)<\frac{2q}{p^{\,t}}+\frac{(p^{\,t}-1)^{2}}{p-1}+1&&if&&~q=p^{m},~m\geq 2t;\\
&s_D(2,q)<\frac{2}{p}\sqrt[3]{(qp)^{2}}+\frac{\sqrt[3]{(qp)^{2}}-2\sqrt[3]{qp}+1}{p-1}
+1&&if&&~q=p^{3t-1};\notag\\
&s_D(2,q)<\min\limits_{v=1,\ldots ,2t+1}\Phi(t,p,v)&&if&&~q=p^{2t+1},\notag
\end{align}
$$\text{where }\Phi(t,p,v)=\left\{ (v+1)p^{t+1}+\frac{(p^{t}-1)^{2v}}{
(p-1)^{v}(p^{2t+1}-1)^{(v-1)}}+2\right\}.$$
For many triples $(t,p,v)$, constructions of \eqref{eq1_Giul2007} provide relatively small saturating sets, see \cite{Giul-plane}.

In \cite{BDFKMP_ComplArc_JG2016}, by computer search in a wide region of $q$, the following upper bounds for the smallest sizes of complete arcs in $\PG(2,q)$ are obtained:
\begin{align}\label{eq1_comp_JG}
&s_D(2,q)\le t_2(2,q)<0.998\sqrt{3q\ln q}&&\mbox{for}&&7\le q\le 160001;\\
&s_D(2,q)\le t_2(2,q)<1.05\sqrt{3q\ln q}&&\mbox{for}&&160001< q\le 301813.\notag
\end{align}
For $q\le160001$ greedy algorithms are used while for $160001< q\le 301813$ the algorithm with fixed order of points (FOP) is applied.

In \cite{BDFKMP-PIT2014}, for $\PG(2,q)$  an iterative step-by-step construction of complete
arcs, which adds a new point in each step, is considered. As an example, it is noted
the \emph{step-by-step greedy algorithm} that in every step adds to the arc a point providing the maximal possible (for the
given step) number of new covered points. For more than half of steps of the iterative process, an
estimate for the number of new covered points in every step is proved. A natural (and well-founded)
conjecture is made that the estimate holds for the other steps too. \emph{Under this conjecture}, the following upper
bound on the smallest size  of a complete arc in  $\PG(2,q)$ is obtained.
\begin{align}
\textbf{conjectural bound: } s_D(2,q)\le t_2(2,q)<\sqrt{q}\sqrt{3\ln q+\ln\ln q+\ln 3}+\sqrt{\frac{q}{3\ln q}}+3.\label{eq2_conj_PIT}
\end{align}
Note also that in \cite{BDFKMP-PIT2014} a \emph{truncated iterative step-by-step process} is considered. The process stops when  the
number of uncovered points attains some (\emph{a priori arbitrary assigned}) value. Then this value is summarized with the number of steps, executed before stopping of the iterative process. The estimate \eqref{eq2_conj_PIT} is obtained when the value, a priori assigned to stop the process, is $\sqrt{\frac{q}{3\ln q}}$; it implies that
the number of the steps, executed before stopping of the step-by-step process, is $\sqrt{q}\sqrt{3\ln q+\ln\ln q+\ln 3}$.

Surveys and results of \emph{probabilistic constructions} for geometrical objects can be
found in \cite {BDGMP_SatSetArxiv,BDGMP_SatSet_Petersb,BFMP-JG2017,BorSzTic,GacsSzonyi,KV,Kovacs,Nagy}; see also the
references therein.

In \cite{BorSzTic}, by using a modified probabilistic approach
introduced in \cite{Kovacs}, the following upper
bound for an arbitrary (not necessarily Desarguesian) plane is proved:
\begin{equation}
s(2,q)<3\sqrt{2}\sqrt{q\ln q}<5\sqrt{q\ln q}.  \label{eq1_BST}
\end{equation}
In \cite{BDGMP_SatSetArxiv}, see also \cite{BDGMP_SatSet_Petersb}, by probabilistic methods different from these in \cite{BorSzTic,Kovacs} the upper bound
\begin{equation}\label{eq1_BDGMP_2015}
s(2,q)\leq 2\sqrt{(q+1)\ln (q+1)}+2\thicksim 2\sqrt{q\ln q}
\end{equation}
is obtained.

In \cite{Nagy}, Z. Nagy obtained the following bound
\begin{equation}\label{eq1_Nagy}
s(2,q)\leq (\sqrt{3}+o(1))\sqrt{q\ln q)}.
\end{equation}
The proof of \eqref{eq1_Nagy} is given in \cite{Nagy} by two approaches: probabilistic and algorithmic. In the both approaches, starting with some stage of the proof,
it is assumed (by the context) that \emph{$q$ is large enoug}h.

The algorithmic
approach in \cite{Nagy} considers an original  step-by-step greedy algorithm and obtains estimates for the number of new covered points in every step of the algorithm.
In order to obtain the bound, the
iterative process stops after executing of $\left\lceil\sqrt{3q\ln q}\,\right\rceil$ steps. It is proved in \cite{Nagy}, that in this case the number of uncovered points is not greater than $\sqrt{q}$. Then the half of the number of uncovered points is summarized with the number of executed steps. As the result of the algorithmic proof of \cite{Nagy}, the following form of the bound can be derived.
\begin{equation}\label{eq1_Nagy_finite}
s(2,q)\leq \left\lceil\sqrt{3q\ln q}\,\right\rceil+\left\lceil\frac{1}{2}\sqrt{q}\right\rceil\leq \sqrt{3q\ln q}+\frac{1}{2}\sqrt{q}+2,\quad q~\text{large enough}.
\end{equation}

In some sense the algorithmic approach of \cite{Nagy} is close to consideration of bounds in \cite{BDFKMP-PIT2014}.  But in \cite{BDFKMP-PIT2014} the number of steps, executed before stopping of the iterative process, depends on a priori assigned number of uncovered points. At the same time, in \cite{Nagy} the
iterative process always stops after executing of $\left\lceil\sqrt{3q\ln q}\,\right\rceil$ steps. Of course, it must be noted that
in \cite{BDFKMP-PIT2014} the bound is conjectural (as the estimates are not proved for all steps of the iterative greedy process)
 whereas in \cite{Nagy} the bound is proved. Note also that problems considered in \cite{BDFKMP-PIT2014}  and \cite{Nagy} are close but not the same (small complete arcs in \cite{BDFKMP-PIT2014}  and small saturating sets in \cite{Nagy}).

\emph{In this paper}, we modify the algorithmic approach of \cite{Nagy} so that the final formula holds for an arbitrary $q$ (\emph{not necessarily large}) and, moreover,
the value of a \emph{new bound is smaller}  than in \eqref{eq1_Nagy_finite}, see \eqref{eq2_Delta}--\eqref{eq2_limitDelta/sqrtqlnq}.

Our main results is Theorem \ref{th1}.
\begin{theorem}
\label{th1} For the smallest size $s(2,q)$ of a saturating set
in a projective plane (not necessarily Desarguesian) of order $q$ \emph{(not necessarily large)} the following upper bound
holds:
\begin{equation}
s(2,q)\leq \sqrt{(q+1)\left(3\ln q+\ln\ln q +\ln\frac{3}{4}\right)}+\sqrt{\frac{q}{3\ln q}}+3.
\label{eq1_satsetsize}
\end{equation}
\end{theorem}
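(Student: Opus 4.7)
The plan is to follow and refine the algorithmic (greedy) approach of Nagy~\cite{Nagy}, combined with the truncated-stopping idea from \cite{BDFKMP-PIT2014}, so that the analysis applies uniformly to all $q$ (not only $q$ large) and with a sharper lower-order term than in \eqref{eq1_Nagy_finite}.

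First I would set up an iterative step-by-step construction: start with $\S_0=\emptyset$ and, writing $\mathrm{Sat}(\S_k)$ for the set of points saturated by $\S_k$ and $U_k:=\Pi_q\setminus(\S_k\cup\mathrm{Sat}(\S_k))$ for the currently uncovered points, let $\S_{k+1}=\S_k\cup\{Q_{k+1}\}$, where $Q_{k+1}\in\Pi_q\setminus\S_k$ is chosen to maximize $|U_k\setminus U_{k+1}|$. The central estimate comes from a double counting. For each $P\in U_k$, the lines $PS_1,\ldots,PS_k$ joining $P$ to the points of $\S_k$ are pairwise distinct and meet only at $P$, for otherwise $P$ would already lie in $\mathrm{Sat}(\S_k)$. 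The candidates $Q\in\Pi_q\setminus\S_k$ whose inclusion would saturate $P$ are exactly $Q=P$ itself together with the $q-1$ non-$\S_k$ points on each of the $k$ lines $PS_i$, giving $k(q-1)+1$ candidates per $P$. Counting pairs $(P,Q)$ in two ways and pigeon-holing over the $q^2+q+1-k$ choices of $Q$ yields
\[
|U_{k+1}|\;\leq\;|U_k|\left(1-\frac{k(q-1)+1}{q^2+q+1-k}\right).
\]

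Next I would iterate this recursion. Taking logarithms and using $\ln(1-x)\leq -x$ gives
\[
\ln |U_K|\;\leq\;\ln(q^2+q+1)-\sum_{k=0}^{K-1}\frac{k(q-1)+1}{q^2+q+1-k}.
\]
A careful estimate of the sum --- for instance by comparison with $\int_{0}^{K}\frac{x(q-1)+1}{q^2+q+1-x}\,dx$, which admits a closed-form logarithmic expression --- should show that choosing
\[
K=\left\lceil\sqrt{(q+1)\bigl(3\ln q+\ln\ln q+\ln\tfrac{3}{4}\bigr)}\,\right\rceil
\]
forces $|U_K|\leq\sqrt{q/(3\ln q)}$.

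Finally I would complete the saturating set by setting $\S:=\S_K\cup U_K$: every $P\notin\S$ lies outside $U_K$, hence is already saturated by $\S_K\subseteq\S$, so $\S$ is saturating. This yields
\[
s(2,q)\;\leq\;|\S_K|+|U_K|\;\leq\;\sqrt{(q+1)\bigl(3\ln q+\ln\ln q+\ln\tfrac{3}{4}\bigr)}+\sqrt{q/(3\ln q)}+3,
\]
the ``$+3$'' absorbing two ceilings and a $+1$. The main obstacle will be the quantitative analysis of the sum/integral that drives the stopping criterion: to reach the constant $\ln\tfrac{3}{4}$ --- a genuine save of $(q+1)\ln 4$ over the naive $(q+1)(3\ln q+\ln\ln q+\ln 3)$ one would obtain by replacing the denominator $q^2+q+1-k$ by $(q+1)^2$ --- one must keep the exact denominator, exploit the extra $+1$ in the numerator contributed by the choice $Q=P$, and verify that the resulting closed-form expression is monotone enough in $q$ that no ``$q$ large enough'' assumption is needed.
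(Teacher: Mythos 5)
Your greedy recursion is fine --- the averaging argument over all $q^2+q+1-k$ candidate points, with $k(q-1)+1$ helpful candidates per uncovered point, is correct and is in fact a slightly simpler (and formally slightly stronger) substitute for the inequality $|\R_{i+1}|\le|\R_i|\bigl(1-\frac{i(q-1)}{q(q+1)}\bigr)$ that the paper quotes from Nagy. The gap is in the endgame. The paper does \emph{not} add all leftover uncovered points: it invokes Nagy's Lemma~2.1, by which the remaining uncovered points can be treated \emph{in pairs}, adding for each pair $P_1,P_2$ a single new point (e.g.\ the intersection of a line joining $P_1$ to a point of the running set with a line joining $P_2$ to a point of the running set), so only $\lceil\xi/2\rceil$ extra points are needed when the process stops with $\xi$ uncovered points. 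This halving is exactly what produces the constant $\ln\frac34$: the paper stops at the threshold $\xi=\sqrt{4q/(3\ln q)}$, pays $\xi/2=\sqrt{q/(3\ln q)}$ at the end, and $2\ln q-\ln\xi=\frac32\ln q+\frac12\ln\ln q+\frac12\ln\frac34$ gives the main term. By adding all of $U_K$ you must instead drive the uncovered count down to $\sqrt{q/(3\ln q)}$, and that changes the required exponent to $\frac32\ln q+\frac12\ln\ln q+\frac12\ln 3$, i.e.\ a deficit of $\ln 2$ in the exponent compared with what your chosen $K$ delivers.

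Concretely: with $K=\bigl\lceil\sqrt{(q+1)(3\ln q+\ln\ln q+\ln\frac34)}\bigr\rceil$ your sum satisfies
$\sum_{k=0}^{K-1}\frac{k(q-1)+1}{q^2+q+1-k}=\frac{3\ln q+\ln\ln q+\ln\frac34}{2}+o(1)$,
so you only get $|U_K|\lesssim 2\sqrt{q/(3\ln q)}$, not $\sqrt{q/(3\ln q)}$; the corrections you hope to harvest from the exact denominator $q^2+q+1-k$ and the ``$+1$'' in the numerator are all $O\bigl((\ln q)^{3/2}/\sqrt q\bigr)$ in the exponent and can never supply the missing constant $\ln 2$. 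Thus your construction yields at best
$\sqrt{(q+1)(3\ln q+\ln\ln q+\ln 3)}+\sqrt{q/(3\ln q)}+O(1)$
(or, with your $K$, the stated main term but a doubled second term), which exceeds the claimed bound by roughly $\ln 2\cdot\sqrt{q/(3\ln q)}\to\infty$, a quantity of the same order as the secondary term. To close the gap you need the pairing lemma (one added point covers two uncovered points), after which optimizing the stopping threshold as $\phi(\xi)=\sqrt{2(q+1)\ln(q^2/\xi)}+\frac{\xi}{2}+3$ with $\xi=\sqrt{4q/(3\ln q)}$ reproduces the theorem exactly as in the paper.
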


Note that modifying the algorithmic approach of \cite{Nagy}, we (similarly to \cite{BDFKMP-PIT2014}) stop the iterative process when  the
number of uncovered points attains a priori assigned value, $\xi$ say. If $\xi=1$ we obtain the bound coinciding with   \eqref{eq1_BDGMP_2015}; if $\xi=\sqrt{q}$ we obtain the bound coinciding with   \eqref{eq1_Nagy_finite}, see Remark \ref{rem2}. Finally, if $\xi=\sqrt{\frac{4q}{3\ln q}}$ we get the bound    \eqref{eq1_satsetsize}.

\begin{remark}
It is interesting that the main term $\sqrt{3q\ln q}$ is the same in the bounds \eqref{eq1_comp_JG}, \eqref{eq2_conj_PIT} for complete arcs and \eqref{eq1_Nagy}--\eqref{eq1_Nagy_finite},
\eqref{eq1_satsetsize} for saturating sets.
\end{remark}

Theorem \ref{th1} can be expressed in terms of \emph{covering codes}.

The \emph{length function} $\ell (R,r,q)$ denotes the smallest
length of a $ q $-ary linear code with covering radius $R$ and
codimension~$r$; see \cite
{Handbook-coverings,BrPlWi,CHLS-bookCovCod}.

Theorem~\ref{th1}
can be read as follows.

\begin{corollary}
The following upper bound on the length function holds.
\begin{equation*}
\ell (2,3,q)\leq \sqrt{(q+1)\left(3\ln q+\ln\ln q +\ln\frac{3}{4}\right)}+\sqrt{\frac{q}{3\ln q}}+3.
\end{equation*}
\end{corollary}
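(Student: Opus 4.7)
My approach modifies Nagy's greedy algorithmic approach from \cite{Nagy}, running the iteration until the number $U_i$ of uncovered points drops below a threshold $\xi=\sqrt{4q/(3\ln q)}$ rather than for a fixed number of steps. I start from $S_0 = \emptyset$ with $U_0 = q^2 + q + 1$, and at each step add to $S_i$ a point $P \notin S_i$ that maximizes the number of newly covered points.

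The central estimate is a non-asymptotic lower bound on the greedy coverage at step $i+1$, obtained by double counting pairs $(P,Q)$ with $P\in\Pi_q\setminus S_i$ a candidate and $Q$ an uncovered point that would be covered upon adding $P$. For each uncovered $Q$, the valid $P$ are $Q$ itself together with the $q-1$ points (other than $Q$ and $R$) on each line $QR$ with $R\in S_i$; the $i$ such lines are distinct precisely because $Q$ is uncovered, giving $i(q-1)+1$ choices per $Q$. Averaging over the $q^2+q+1-i$ candidates yields
\begin{equation*}
U_{i+1} \;\le\; U_i\Bigl(1 - \tfrac{i(q-1)+1}{q^2+q+1-i}\Bigr).
\end{equation*}
Taking logs, applying $\ln(1-x)\le -x$ and estimating the telescoping sum $\sum_{i=0}^{k-1}(i(q-1)+1)/(q^2+q+1-i)$ by closed-form integral comparison (not asymptotic), I show that the smallest $k^*$ with $U_{k^*}\le \xi$ satisfies
\begin{equation*}
k^* \;\le\; \sqrt{(q+1)\bigl(3\ln q + \ln\ln q + \ln(3/4)\bigr)} + 1,
\end{equation*}
the constant $\ln(3/4)$ arising from $2\ln(U_0/\xi) = 3\ln q+\ln\ln q+\ln(3/4)+o(1)$ combined with the effective coefficient $(q+1)$ of the quadratic sum.

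To finish, the at most $\xi$ uncovered points remaining after step $k^*$ must be covered by adding at most $\lceil\xi/2\rceil+O(1)=\sqrt{q/(3\ln q)}+O(1)$ further points. The saving by a factor of two comes from the fact that at this stage every uncovered point $Q$ already lies on $k^*$ distinct secants through $S_{k^*}$; a pairing or continued greedy argument (exploiting that in this regime $\xi\cdot k^*/(q+1)\approx 2$, so adding a single uncovered point typically covers itself plus one more) matches the uncovered points in pairs and handles each pair with one new element. Summing the greedy step count with the finishing cost and absorbing rounding into the additive constant $3$ produces \eqref{eq1_satsetsize}.

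The main obstacle is keeping every inequality valid uniformly in $q$ rather than only for $q$ large enough as in \cite{Nagy}: the telescoping-sum bound on $k^*$ must retain all lower-order terms in closed form, and the finishing step must be controlled with explicit constants rather than through $o(1)$ estimates. The choice $\xi=\sqrt{4q/(3\ln q)}$ is the calculus minimizer of $k^*(\xi)+\xi/2$, which is what yields the improved leading constant relative to \eqref{eq1_Nagy_finite}.
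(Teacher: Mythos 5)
Your core argument is essentially the paper's own route (Section~\ref{sec_1-satur}): a truncated greedy process with a multiplicative per-step decay of the uncovered set, the bound $1-x\le e^{-x}$, a stopping threshold $\xi$ optimized by calculus to $\xi=\sqrt{4q/(3\ln q)}$, and a final stage covering the at most $\xi$ leftover points two at a time. The only real variation is the per-step estimate: you run the plain greedy over all points and double count, getting $U_{i+1}\le U_i\bigl(1-\tfrac{i(q-1)+1}{q^2+q+1-i}\bigr)$, whereas the paper simply quotes Nagy's recursion \eqref{eq2_Nagy_R_i+1} for his skew-line greedy; your recursion is marginally stronger and self-contained, and the arithmetic does land within the stated additive constants. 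One bookkeeping caution: your effective coefficient is $\tfrac{2(q^2+q+1)}{q-1}>2(q+1)$ and your telescoping sum carries $k(k-1)/2$ rather than $k^2/2$, so the claimed ``$+1$'' on $k^*$ is slightly optimistic (about $+3/2$ plus rounding is what actually comes out); the final ``$+3$'' still absorbs this, but it must be verified explicitly, in the spirit of the paper's condition \eqref{eq2_k<}.

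Two points must be repaired for this to prove the stated corollary. First, the statement concerns the length function $\ell(2,3,q)$, while your argument as written only bounds the size of a saturating set; you need the bridge that the homogeneous coordinates of an $n$-point saturating set in $\PG(2,q)$ form a parity check matrix of an $[n,n-3]_q$ code with covering radius $2$, whence $\ell(2,3,q)\le s_D(2,q)\le s(2,q)$ --- given Theorem~\ref{th1}, this translation is precisely the content of the corollary, and it is absent from your write-up. Second, the finishing step: ``adding a single uncovered point typically covers itself plus one more'' is a heuristic, not a proof, and a ``continued greedy'' does not obviously yield the factor $2$. What is needed (and what the paper invokes via \cite[Lemma 2.1]{Nagy}) is the deterministic pairing fact: for any two uncovered points $Q_1,Q_2$ one new point covers both, e.g.\ the intersection of the lines $Q_1R_1$ and $Q_2R_2$ for distinct $R_1,R_2$ in the current set (such a point cannot already lie in the set because $Q_1,Q_2$ are uncovered, and the degenerate cases are easily handled). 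With these two repairs your argument is correct and coincides with the paper's.
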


Let $\PG(N,q)$ be the $N$-dimensional projective space over the
Galois field of $q$ elements.

\begin{definition}
A point set $\S\subset \PG(N,q) $ is \emph{saturating} if any
point of $\PG(N,q)\setminus \S$ is collinear with two points in
$\S$.
\end{definition}

A particular kind of saturating sets in a projective space is \emph{complete caps}.  A cap is a set
of points no three of which are collinear. A cap is said to be complete if it cannot be extended to a large cap.

Let $[n,n-r]_{q}R$ be a linear $q$-ary code of length $n,$
codimension $r,$ and covering radius$~R.$ The homogeneous
coordinates of the points of a saturating set with size $n$ in
$\PG(r-1,q),$ form a parity check matrix of an $[n,n-r]_{q}2$
code.

 Results on saturating sets in $\PG(N,q)$ and the corresponding covering codes can be found  in \cite
{BFMP-JG2017,Handbook-coverings,BrPlWi,CHLS-bookCovCod,DavCovCodeSatSetIEEE1995,DavCovRad2,DGMP_CovCodeNonBin_Pamporovo,%
DGMP_CovCodeR23_Petersb2008,DGMP-AMC,DMP-JCTA2003,DavOst-EJC,DavOst-IEEE2001,Giul2013Survey,Janwa1990,Ughi}
and the references therein.

Let $s(N,q)$ be \emph{the smallest size of a saturating set in}
$\PG(N,q),$ $N\geq 3$.

In terms of covering codes, we recall the
equality
$$s(N,q)=\ell (2,N+1,q).$$

The trivial lower bound for $s(N,q)$ is
\begin{align*}
    s(N,q)>\sqrt{2}q^\frac{N-1}{2}.
\end{align*}
Constructions of saturating sets (or the corresponding covering codes) whose
size is close to this lower bound are only known for $N$ odd, see \cite{DavCovRad2,DGMP-AMC,Giul2013Survey} for survey. In particular,
in \cite[Theorem 9]{DavOst-IEEE2001}, see also \cite[Section 4.3]{DGMP-AMC}, the following bound is obtained by algebraic constructions:
\begin{align}
    s(N,q)=\ell (2,N+1,q)\le 2q^{\frac{N-1}{2}}+q^{\frac{N-3}{2}},\quad N=2t-1\ge3,~N\neq7,11,~q\ge7,~q\neq9,
\end{align}
where $t=2,3,5$, and $t\ge7$.

From (\ref{eq1_satsetsize}), by
using inductive constructions from \cite
{DavCovRad2,DGMP-AMC}, we obtained upper bounds on the
smallest size of a saturating set in the $N$-dimensional
projective space $\PG(N,q)$ with $N$ even; see Section
\ref{sec_space}. In many cases these bounds are better than the
known ones.

The paper is organized as follows. In Section
\ref{sec_1-satur}, we deal with upper bounds on the smallest
size of a saturating set in a projective plane.  In Section \ref{sec_space}, bounds
for saturating  sets in the projective
space $\PG(N,q)$ are obtained.

\section{A modification of Nagy's approach for upper bound on the smallest size of a saturating set in a
projective plane\label{sec_1-satur}}

Assume that in $\Pi_q$ a saturating set
 is constructed by a step-by-step algorithm adding one new point to the set in
every step.

Let $i>0$ be an integer. Denote by $\S_i$ the running set obtained after the
$i$-th step of the algorithm. A point $P$ of $\Pi_q\setminus\S_i$ is covered by $\S_i$ if $P$ lies on $t$-secant of $\S_i$ with $t\ge2$.
 Let $\R_i$ be the subset of  $\Pi_q\setminus\S_i$ consisting of points not covered by $\S_i$.

 In \cite{Nagy} the following ingenious \emph{greedy algorithm} is proposed. One takes the line $\ell$ skew to $\S_i$ such that the cardinality of intersection $|\R_i\cap\ell|$ is the minimal among all skew lines.
Then one adds to $\S_i$ the point on $\ell$ providing the greatest number of new covered points (in comparison with other points of $\ell$). As a result we obtain the set $\S_{i+1}$ and the corresponding set $\R_{i+1}$.

The following Proposition is proved in \cite{Nagy}.
\begin{proposition}\emph{\cite[Proposition 3.3, Proof]{Nagy}} It holds that
\begin{align}
   &|\R_{i+1}|\le|\R_i|\cdot\left(1-\frac{i(q-1)}{q(q+1)}\right).\label{eq2_Nagy_R_i+1}
\end{align}
\end{proposition}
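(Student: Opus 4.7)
The plan is a two-level averaging argument exploiting both minimality conditions in the algorithm. Write $R := |\R_i|$, $r := |\ell \cap \R_i|$, and for each candidate $P \in \ell$ let $\R_{i+1}^P$ denote the would-be uncovered set after adding $P$. The key combinatorial observation is
\[
\R_{i+1}^P = \{Q \in \R_i \setminus \{P\} : \text{the line } QP \text{ is skew to } \S_i\},
\]
since a point $Q \in \R_i$ becomes covered only via a new $2$-secant containing $P$ together with a point of $\S_i$. The number of newly covered points therefore equals $R - |\R_{i+1}^P| - [P \in \R_i]$, and the algorithm's choice $P^\ast$ maximizes this quantity, hence minimizes $|\R_{i+1}^P| + [P \in \R_i]$; in particular
\[
|\R_{i+1}| = |\R_{i+1}^{P^\ast}| \le |\R_{i+1}^{P^\ast}| + [P^\ast \in \R_i] \le \frac{1}{q+1}\sum_{P \in \ell}\left(|\R_{i+1}^P| + [P \in \R_i]\right).
\]

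To evaluate the average I would compute $\sum_{P \in \ell} |\R_{i+1}^P|$ by swapping the order of summation and using the structural fact that every $Q \in \R_i$ lies on exactly $i$ lines meeting $\S_i$ (one per point of $\S_i$, since no $2$-secant passes through $Q$), hence on exactly $q+1-i$ skew lines. Splitting according to whether $Q \in \ell$ (contributing $q$, since then $QP = \ell$ is skew for every $P \in \ell \setminus \{Q\}$) or $Q \notin \ell$ (the $q+1$ choices of $P$ giving all $q+1$ lines through $Q$, of which $q+1-i$ are skew) yields
\[
\sum_{P \in \ell} |\R_{i+1}^P| = rq + (R-r)(q+1-i) = R(q+1-i) + r(i-1),
\]
and $\sum_{P \in \ell}[P \in \R_i] = r$, so $|\R_{i+1}| \le [R(q+1-i) + ri]/(q+1)$.

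The remaining and main step is to bound $r$. Minimality of $\ell$ among skew lines combined with the analogous double count $\sum_{m \text{ skew}} |m \cap \R_i| = R(q+1-i)$ gives $r \le R(q+1-i)/L$, where $L$ is the number of skew lines. The hard part is proving $L \ge q(q+1-i)$, equivalently that at most $iq+1$ lines meet $\S_i$. I would establish this by a third double count: if $b_k$ denotes the number of lines meeting $\S_i$ in exactly $k$ points, then $\sum_k k b_k = i(q+1)$ and $\sum_{k \ge 2}\binom{k}{2} b_k = \binom{i}{2}$, so the number of lines meeting $\S_i$ equals $i(q+1) - \sum_{k \ge 2}(k-1) b_k$; minimizing the latter under $\sum_{k \ge 2} k(k-1) b_k = i(i-1)$ concentrates all weight at the maximal index $k = i$ (extremal when $\S_i$ is collinear), giving $\sum_{k \ge 2}(k-1) b_k \ge i-1$. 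This yields $r \le R/q$, and substitution gives
\[
|\R_{i+1}| \le \frac{R\,[q(q+1-i) + i]}{q(q+1)} = R\left(1 - \frac{i(q-1)}{q(q+1)}\right),
\]
which is the claimed bound.
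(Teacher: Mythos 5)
Your proof is correct: the identification of $\R_{i+1}^P$ with the uncovered points $Q$ whose line $QP$ misses $\S_i$, the count of exactly $q+1-i$ skew lines through each uncovered point, the averaging over the points of $\ell$, and the bound $r\le R/q$ via the estimate that at most $iq+1$ lines meet $\S_i$ all check out, and the algebra yields exactly the factor $1-\frac{i(q-1)}{q(q+1)}$. The paper itself gives no proof of this proposition (it is quoted from Nagy's paper), and your argument is essentially a faithful reconstruction of that cited greedy double-counting proof, so there is nothing substantively different to compare.
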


Clearly, that always
\begin{align}\label{eq2_R2}
    \R_2=q^2.
\end{align}

Iteratively applying the relation \eqref{eq2_Nagy_R_i+1} to $\R_2=q^2$, we obtain for some $k$ the following:
\begin{align}\label{eq2_Delta}
|\R_{k+1}|\le q^2\prod_{i=2}^k\left(1-\frac{i(q-1)}{q(q+1)}\right).
\end{align}

We denote
\begin{align}\label{eq2_fqk}
    f_q(k)=\prod_{i=2}^k\left(1-\frac{i(q-1)}{q(q+1)}\right).
\end{align}

Similarly to \cite{BDFKMP-PIT2014}, we consider a \emph{truncated iterative process}. We will stop the iterative process when $|\R_{k+1}|\leq \xi  $ where $\xi
\geq 1$ is some value that we may \emph{assign arbitrary} to improve estimates.

By \cite[Lemma 2.1]{Nagy} after the end of the iterative process we can add at most $\lceil\,|\R_{k+1}|/2\rceil$ points to
the running subset $\S_{k+1}$ in order to get the final saturating set $\S$.

The size $s$ of the obtained set $\S$ is
\begin{equation}\label{eq2_general_bound}
s\leq k+1+\left\lceil\frac{\xi}{2}\right\rceil \text{ under condition }q^2f_q(k)\leq \xi.
\end{equation}

Using the inequality $1-x\leq e^{-x}$ we obtain that
\begin{equation*}
f_q(k)<e^{-\sum_{i=2}^{k}\frac{i(q-1)}{q^{2}+q}}=e^{-\frac{(k^{2}+k-2)(q-1)}{
2(q^{2}+q)}},
\end{equation*}
which implies
\begin{equation}
f_q(k) <e^{-\frac{(k^{2}+k-2)(q-1)}{2(q^{2}+q)}}<e^{-\frac{k^{2}}{2q+2}},
\label{eq2_fq(k)_estimate}
\end{equation}
provided that
\begin{equation*}
\frac{(k^2+k-2)(q-1)}{q}>k^2
\end{equation*}
or, equivalently,
\begin{align*}
    \frac{k^2}{k-2}<q-1,
\end{align*}
\begin{equation}\label{eq2_k<}
k<q-4.
\end{equation}

\begin{lemma}
\label{lem3_basic}  Let $\xi\ge1$ be a fixed value independent of $k$. The value
\begin{equation}
k\geq \left\lceil\sqrt{2(q+1)}\sqrt{\ln \frac{q^2}{\xi }}\,\,\right\rceil  \label{eq2_k<=}
\end{equation}
satisfies  inequality $q^2f_q(k)\leq \xi$.
\end{lemma}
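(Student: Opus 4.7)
The plan is a direct calculation: I would simply plug the prescribed value of $k$ into the exponential estimate (2.6) already derived in the excerpt, and check that the exponent is large enough in absolute value to defeat the factor $q^{2}$.

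First I would note that the inequality to be proved, $q^{2}f_q(k)\le\xi$, can be established via (2.6) if
$$
q^{2}e^{-k^{2}/(2q+2)}\le\xi,
$$
which after taking logarithms becomes the equivalent condition
$$
k^{2}\ge 2(q+1)\ln\!\frac{q^{2}}{\xi}.
$$
The hypothesis (2.7) has been engineered precisely for this: squaring it gives $k^{2}\ge 2(q+1)\ln(q^{2}/\xi)$, so $e^{-k^{2}/(2q+2)}\le e^{-\ln(q^{2}/\xi)}=\xi/q^{2}$, and multiplying by $q^{2}$ yields the desired inequality. Combined with $f_q(k)<e^{-k^{2}/(2q+2)}$ from (2.6), this chain produces $q^{2}f_q(k)<\xi$.

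The one step that requires genuine attention is the side hypothesis (2.8), namely $k<q-4$, under which the estimate (2.6) was derived. I would therefore check that the $k$ arising from (2.7) does fall in this regime. Since $\xi\ge 1$ implies $\ln(q^{2}/\xi)\le 2\ln q$, the bound from (2.7) is at most $\lceil 2\sqrt{(q+1)\ln q}\,\rceil$, and for all $q$ of interest (say $q\ge 16$ or so) this is easily smaller than $q-4$; for the finitely many remaining values of $q$, the statement can be verified directly by a numerical check against the definition (2.5) of $f_q(k)$. With this verification, the applicability of (2.6) is secured, and the two-line calculation above completes the proof. The whole argument is essentially bookkeeping; there is no real obstacle beyond making sure one stays inside the range where the exponential bound (2.6) is legitimate.
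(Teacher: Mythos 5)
Your proposal is correct and follows essentially the same route as the paper: the paper's proof is exactly the observation that, by the estimate $f_q(k)<e^{-k^{2}/(2q+2)}$, it suffices to have $e^{-k^{2}/(2q+2)}\le \xi/q^{2}$, which the prescribed $k$ gives upon squaring and taking logarithms. Your additional verification that $k$ stays below the threshold $k<q-4$ needed for that estimate (with a direct check for small $q$, where the true threshold is nearer $q\approx 23$ than $16$) is a point the paper silently glosses over, so if anything your write-up is slightly more careful than the original.
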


\begin{proof}
By \eqref{eq2_fq(k)_estimate}, to provide $q^2f_q(k)\leq \xi$ it
is sufficient to find $k$ such that
\begin{equation*}
e^{-\frac{k^{2}}{2q+2}}< \frac{\xi }{q^2}.
\end{equation*}
\end{proof}

\begin{theorem}
\label{th2_general bound_converse}In a plane $
\Pi_q$ it holds that
\begin{equation}
s(2,q)\leq \sqrt{2(q+1)}\sqrt{\ln \frac{
q^2}{\xi }}+\frac{\xi}{2}+3,~~\xi \geq 1,  \label{eq2_gen bound}
\end{equation}
where $\xi $ is an arbitrarily chosen value.
\end{theorem}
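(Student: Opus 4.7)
The plan is to combine the two results already at hand: the bound $s \leq k+1+\lceil \xi/2\rceil$ under the condition $q^2 f_q(k)\leq \xi$ (equation \eqref{eq2_general_bound}), together with Lemma \ref{lem3_basic}, which guarantees that this condition is met as soon as $k\geq \lceil\sqrt{2(q+1)}\sqrt{\ln(q^2/\xi)}\,\rceil$. So the whole argument amounts to choosing the optimal $k$ and absorbing ceiling errors into an additive constant.

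Concretely, I would fix $\xi\geq 1$ and set
\[
k = \left\lceil \sqrt{2(q+1)}\sqrt{\ln \tfrac{q^2}{\xi}}\,\right\rceil.
\]
By Lemma \ref{lem3_basic} this value of $k$ satisfies $q^2 f_q(k)\leq \xi$, so by \eqref{eq2_general_bound} the truncated greedy procedure produces a saturating set $\S$ with
\[
|\S|\leq k+1+\left\lceil\tfrac{\xi}{2}\right\rceil.
\]
Using $\lceil x\rceil\leq x+1$ twice (once for $k$, once for $\lceil\xi/2\rceil$) I get
\[
|\S|\leq \sqrt{2(q+1)}\sqrt{\ln \tfrac{q^2}{\xi}} + 1 + 1 + \tfrac{\xi}{2} + 1 = \sqrt{2(q+1)}\sqrt{\ln \tfrac{q^2}{\xi}} + \tfrac{\xi}{2} + 3,
\]
which is exactly \eqref{eq2_gen bound}, since $s(2,q)\leq |\S|$.

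The only subtle point is the side condition $k<q-4$ used to derive \eqref{eq2_fq(k)_estimate}, on which Lemma \ref{lem3_basic} implicitly relies. For the chosen $k$ this reads $\sqrt{2(q+1)\ln(q^2/\xi)}+1 < q-4$, which holds for all moderately large $q$ and any admissible $\xi\geq 1$; for the finitely many remaining small values of $q$ the bound \eqref{eq2_gen bound} can be verified directly, since its right-hand side exceeds the trivial estimate $s(2,q)\leq q+1$ (take, e.g., a line together with one extra point). I expect this verification, and the care needed to state Lemma \ref{lem3_basic} so that it applies uniformly in $\xi$, to be the only non-routine step; the rest is bookkeeping of ceilings and constants.
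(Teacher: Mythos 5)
Your proposal is correct and takes essentially the same route as the paper, whose proof of Theorem \ref{th2_general bound_converse} is exactly the one-line combination of \eqref{eq2_general_bound} with Lemma \ref{lem3_basic}, the ceilings being absorbed into the additive constant $3$. Your additional attention to the side condition $k<q-4$ behind \eqref{eq2_fq(k)_estimate} goes beyond the paper (which passes over it silently); only note that the trivial saturating set (a line plus one external point) has size $q+2$, so your quick claim that the right-hand side of \eqref{eq2_gen bound} dominates the trivial bound for the exceptional small $q$ would still need an explicit check.
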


\begin{proof}
The assertion follows from \eqref{eq2_general_bound} and \eqref{eq2_k<=}.
\end{proof}

     We consider
the function of $\xi$ of the form
\begin{equation*}
\phi(\xi)=\sqrt{2(q+1)}\sqrt{\ln \frac{
q^2}{\xi }}+\frac{\xi}{2}+3.  \label{eq2_phi}
\end{equation*}
Its derivative by $\xi$ is
\begin{equation*}
\phi'(\xi)=\frac{1}{2}-\frac{1}{\xi}\sqrt{\frac{q+1}{2\ln\frac{q^{2}}{\xi }}}.  \label{eq2_der phi}
\end{equation*}
Put $\phi'(\xi)=0$. Then it is easy to see that
\begin{align}\label{eq2_xi^2}
    \xi^2=\frac{q+1}{\ln q-\frac{1}{2}\ln \xi}.
\end{align}
We find $\xi$ in the form $
\xi=\sqrt{\frac{q+1}{c\ln q}}$. By \eqref{eq2_xi^2},
\begin{align*}
    c=1-\frac{\ln(q+1)}{4\ln q}+\frac{\ln c+\ln\ln q}{4\ln q}.
\end{align*}
For simplicity, we choose $c\approx\frac{3}{4}$ and put
\begin{align}\label{eq2_xi_choice}
\xi =\sqrt{\frac{4q}{3\ln q}}.
\end{align}

Now, substituting $
\xi =\sqrt{\frac{4q}{3\ln q}}
$ in \eqref{eq2_gen bound}, we obtain Theorem \ref{th1}.

\begin{remark}\label{rem2}\begin{description}
                \item[(i)] Let $\xi=1$. From \eqref{eq2_gen bound} we have
    \begin{align}\label{eq2_xi=1}
       s(2,q)\le2\sqrt{(q+1)\ln q}+3,
    \end{align}
that practically coincides with  bound \eqref{eq1_BDGMP_2015} from \cite{BDGMP_SatSetArxiv,BDGMP_SatSet_Petersb}.
                \item[(ii)]  Let $\xi=\sqrt{q}$. From \eqref{eq2_gen bound} we obtain the estimate
    \begin{align}\label{eq2_xi=sqrt_q}
       s(2,q)\le\sqrt{3(q+1)\ln q}+\frac{1}{2}\sqrt{q}+3
    \end{align}which  practically coincides with Nagy's bound \eqref{eq1_Nagy_finite}.
However, as it is noted below, the value $\xi =\sqrt{\frac{4q}{3\ln q}}$ gives a better estimate than \eqref{eq2_xi=sqrt_q}.
              \end{description}
\end{remark}

We denote the difference
\begin{align*}
    \Delta(q)=\sqrt{3q\ln q}+\frac{1}{2}\sqrt{q}+2-\left(\sqrt{(q+1)\left(3\ln q+\ln\ln q +\ln\frac{3}{4}\right)}+\sqrt{\frac{q}{3\ln q}}+3\right).
\end{align*}
It can be shown (e.g. by consideration of the corresponding derivations) that
\begin{align}\label{eq2_Delta}
    \Delta(q)>0~\text{ for }~q\ge919,
\end{align}
and, moreover, $\Delta(q)$ and $\frac{\Delta(q)}{\sqrt{q}}$ are increasing functions of $q$.
For illustration, see Fig.\,\ref{fig_Delta} where the top dashed-dotted black curve shows $\Delta(q)$ while the bottom solid red curve $\sqrt{\frac{q}{7}}$ is given for comparison.
\begin{figure}[h]
\includegraphics[width=\textwidth]{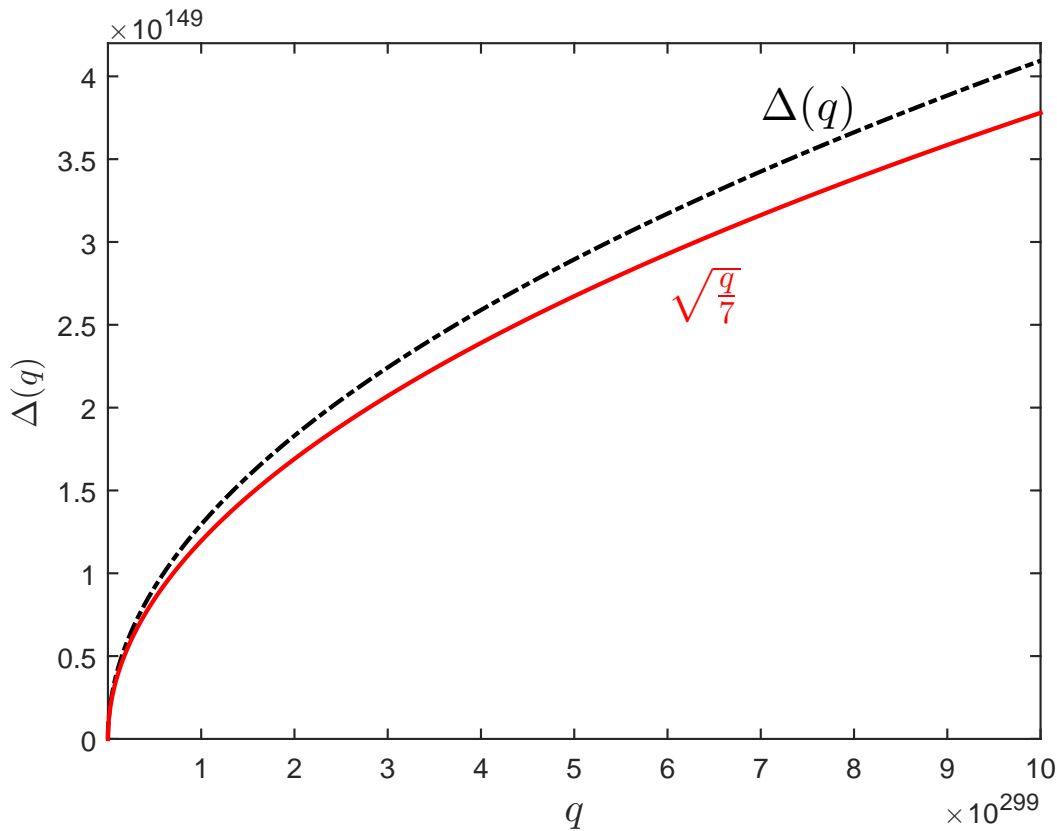}
\caption{The difference $\Delta(q)$ (\emph{top dashed-dotted black curve}) vs $\sqrt{\frac{q}{7}}$ (\emph{bottom solid red curve})}\label{fig_Delta}
\end{figure}

Note also that
\begin{align*}
&    \frac{\Delta(q)}{\sqrt{q}}\approx\sqrt{3\ln q}+\frac{1}{2}-\sqrt{3\ln q+\ln\ln q}-\frac{1}{\sqrt{3\ln q}}\,,\\
 &   \frac{\Delta(q)}{\sqrt{q\ln q}}\approx\sqrt{3}+\frac{1}{2\sqrt{\ln q}}-\sqrt{3+\frac{\ln\ln q}{\ln q}}-\frac{1}{\sqrt{3}\ln q}\,,
\end{align*}
whence
\begin{align}
\lim_{q\rightarrow\infty}\frac{\Delta(q)}{\sqrt{q}}=\frac{1}{2},\label{eq2_limitDelta/sqrtq}\\
\lim_{q\rightarrow\infty}\frac{\Delta(q)}{\sqrt{q\ln q}}=0.\label{eq2_limitDelta/sqrtqlnq}
\end{align}

\section{Upper bounds on the smallest size of a saturating set in the
projective space $\PG(N,q)$, $N$ even\label{sec_space}}

In further we use the results of \cite{DavCovRad2,DGMP-AMC} that give the following inductive construction.
\begin{proposition}\label{prop3}
 \emph{\cite[Example 6]{DavCovRad2} \cite[Theorem 4.4]{DGMP-AMC}}
  Let  exist an $ [n_{q},n_{q}-3]_{q}2$ code with $n_q<q$.
 Then, under condition $q+1\geq 2n_q$, there is an infinite family of
$[n,n-r]_{q}2$ codes with $r=2t-1\geq
5,~r\neq 9,13,~n=n_{q}q^{t-2}+2q^{t-3}$, where $t=3,4,6$, and $t\ge8$.
For $r=9,13,$ it holds that $n=n_{q}q^{t-2}+2q^{t-3}+q^{t-4}+q^{t-5}$.
\end{proposition}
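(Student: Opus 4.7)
The plan is to establish the proposition by an inductive application of the $q^m$-concatenating construction for linear covering codes, as developed in the references cited at the start of the proposition. The starting object is the given $[n_q, n_q-3]_q 2$ code, whose parity-check matrix has columns that form a saturating set of size $n_q$ in $\PG(2,q)$.

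First I would carry out the base step. Let $H_0$ be a parity-check matrix of the starting code, interpreted column-wise as a saturating set in $\PG(2,q)$. One constructs an $[n_1, n_1-5]_q 2$ code whose parity-check matrix has the block form
\begin{equation*}
H_1 = \begin{pmatrix} H_0 & H_0 & \cdots & H_0 & B_0 \\ V_1 & V_2 & \cdots & V_q & O \end{pmatrix},
\end{equation*}
where each $V_j$ consists of $n_q$ identical columns representing a point of $\PG(4,q)\setminus\PG(2,q)$, the collection $\{V_1,\ldots,V_q\}$ runs through a suitable affine frame over $\PG(2,q)$, and $B_0$ is a short auxiliary parity-check matrix handling the residual points not yet covered. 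The hypothesis $q+1\geq 2n_q$ is precisely what guarantees that the original saturating set admits enough external (skew) secants to cover, via these block-translates, every point of the enlarged ambient space outside the support of $B_0$; this accounts for the $+2$ in the length formula $n_1 = q n_q + 2$.

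Next, I would iterate the step $m=t-2$ times, each application increasing the codimension by two and multiplying the length by $q$ plus a bounded correction, producing an $[n, n-r]_q 2$ code with $r=2t-1$ and $n = n_q q^{t-2}+2q^{t-3}$. The two exceptional codimensions $r=9,13$, corresponding to $t=5,7$, require particular short auxiliary attachment codes in the construction; these auxiliaries are unavailable with their expected lengths, and the standard workaround from the cited papers is to append two extra blocks of sizes $q^{t-4}$ and $q^{t-5}$, yielding the stated correction term.

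The main obstacle is verifying that the covering radius remains equal to $2$ after each concatenation, i.e., that every point of $\PG(r-1,q)$ outside the column set of the new parity-check matrix lies on some $2$-secant of that set. This reduces to a covering lemma of Davydov governing the relationship between the parameters of $H_0$, the affine frame used for the $V_j$, and the auxiliary matrix $B_0$. The inequality $q+1\geq 2n_q$ is exactly the hypothesis that makes this lemma applicable at every inductive level; once it is verified, the induction runs cleanly, and explicit constructions of the missing small auxiliary codes handle the cases $r=9,13$.
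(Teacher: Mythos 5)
The paper does not prove Proposition 3.1 at all: it is quoted from \cite[Example 6]{DavCovRad2} and \cite[Theorem 4.4]{DGMP-AMC}, so there is no internal argument to compare against, and any proof you give must in effect reconstruct those constructions. Your sketch points at the right machinery (Davydov's $q^m$-concatenating constructions applied to the parity-check matrix of the starting $[n_q,n_q-3]_q2$ code, viewed as a saturating set in $\PG(2,q)$), but as written it contains a genuine gap in the inductive step. If each application of your construction raises the codimension by $2$ and sends the length $n\mapsto qn+2$ (your base step $n_1=qn_q+2$), then iterating $t-2$ times yields $n=n_qq^{t-2}+2\left(q^{t-3}+q^{t-4}+\cdots+q+1\right)$, not the claimed $n=n_qq^{t-2}+2q^{t-3}$; already at $t=4$ your scheme gives $n_qq^2+2q+2$ instead of $n_qq^2+2q$. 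The stated lengths are obtained by applying the concatenating construction in a single jump with $m=t-2$, where the term $2q^{t-3}$ comes from one appended auxiliary (``complementary'') code of length $2q^{t-3}$ and the appropriate codimension, not from an accumulation of per-step corrections. The existence of exactly these auxiliary codes is what restricts $t$ to $3,4,6$ and $t\ge8$, and their failure for $t=5,7$ (i.e.\ $r=9,13$) is what forces the extra terms $q^{t-4}+q^{t-5}$. In your step-by-step framework no value of $t$ would be exceptional, which is a clear sign that the framework does not reproduce the actual construction; asserting that ``the auxiliaries are unavailable'' for $r=9,13$ is not something your setup can even formulate, since you never specify which auxiliary codes are needed and why.

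A second, smaller issue is your reading of the hypothesis $q+1\ge 2n_q$. In the cited constructions this condition is a requirement on the starting code (roughly, that the $n_q$ columns can be labelled by field elements in a pairwise-compatible way so that every point of the lifted space lies on a $2$-secant of the new column set); your interpretation via ``enough external skew secants to the saturating set'' is an unverified guess and plays no role in the length bookkeeping you propose. To turn the sketch into a proof you would need to state the concatenating construction precisely (blocks, labels, and the auxiliary code), prove the covering-radius-$2$ property for the combined matrix under $q+1\ge 2n_q$, and exhibit the auxiliary codes for each admissible $t$, which is exactly the content of \cite[Example 6]{DavCovRad2} and \cite[Theorem 4.4]{DGMP-AMC}.
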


Now due to one-to-one correspondence between covering codes and saturating sets we obtain the corollary from Theorem \ref{th1} and Proposition \ref{prop3}.
We denote
\begin{align*}
    \Upsilon(q)=\sqrt{(q+1)\left(3\ln q+\ln\ln q +\ln\frac{3}{4}\right)}+\sqrt{\frac{q}{3\ln q}}+3.
\end{align*}

\begin{corollary}
\label{cor_space}For the smallest size $s(N,q)$ of a
saturating set in the projective space $\PG(N,q)$ and for the length function $\ell (2,N+1,q),$
the following upper bounds hold:\begin{description}
                                  \item[(i)] \begin{align}
s(N,q)=\ell (2,N+1,q)\leq \Upsilon(q)\cdot q^{\frac{N-2}{2}}+2q^{\frac{N-4}{2}},\quad N=2t-2\geq
4,\quad N\neq 8,12,\label{eq3_SatSpace}
\end{align}
where  $t=3,4,6$, and $t\ge8$,  $q\geq79$.
                                  \item[(ii)] \begin{align}
s(N,q)=\ell (2,N+1,q)\leq\Upsilon(q)\cdot q^{\frac{N-2}{2}}+2q^{\frac{N-4}{2}}+q^{\frac{N-6}{2}}+q^{\frac{N-8}{2}},\quad N=8,12. \label{eq3_SatSpace2}
\end{align}
                                \end{description}
\end{corollary}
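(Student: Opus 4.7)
The plan is to combine the planar bound of Theorem \ref{th1} with the inductive construction of Proposition \ref{prop3} by a direct substitution. By Theorem \ref{th1}, there is a saturating set in $\PG(2,q)$ of some size $n_q$ with $n_q \le \Upsilon(q)$, equivalently an $[n_q,n_q-3]_q 2$ code of that length. Feeding this code into Proposition \ref{prop3}, one obtains $[n,n-r]_q 2$ codes of length
\[
n = n_q q^{t-2} + 2q^{t-3}
\]
for $r = 2t-1 \notin \{9,13\}$, with $t \in \{3,4,6\} \cup \{t : t \ge 8\}$, and of length $n = n_q q^{t-2} + 2q^{t-3} + q^{t-4} + q^{t-5}$ for $r \in \{9,13\}$. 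Writing $N = r-1 = 2t-2$ and using $s(N,q) = \ell(2,N+1,q) \le n$, the exponents $t-2, t-3, t-4, t-5$ become $(N-2)/2, (N-4)/2, (N-6)/2, (N-8)/2$ respectively, and replacing $n_q$ by its upper bound $\Upsilon(q)$ yields \eqref{eq3_SatSpace} in case (i) and \eqref{eq3_SatSpace2} in case (ii).

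Before this substitution is legal, the hypotheses of Proposition \ref{prop3} must be verified for the chosen $n_q$. The two conditions are $n_q < q$ and $q+1 \ge 2n_q$; the latter is the binding one, and it suffices to establish
\[
2\sqrt{(q+1)\!\left(3\ln q + \ln\ln q + \ln\tfrac{3}{4}\right)} + 2\sqrt{\tfrac{q}{3\ln q}} + 6 \le q + 1.
\]
Since the left-hand side grows like $\sqrt{12 q \ln q} = o(q)$, this inequality holds for all sufficiently large $q$. The plan for pinning down the stated threshold $q \ge 79$ is first to show, via an elementary calculus argument on the sign of the derivative of the difference of the two sides, that the inequality is preserved once it holds at a single $q_0$, and then to verify numerically at $q = 79$ that it indeed holds. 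For the two exceptional cases in (ii), the same threshold suffices because the extra summands $q^{t-4} + q^{t-5}$ do not enter the hypothesis check, which depends only on $n_q$.

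The step I expect to be the main obstacle is only mildly technical: pinning down $q \ge 79$ as the smallest value for which the construction applies. All other work is symbolic bookkeeping --- translating the indices of Proposition \ref{prop3} from $(r,t)$ to $N$, identifying which values of $N$ fall into the exceptional family $\{8,12\}$, and substituting the planar estimate $\Upsilon(q)$ for $n_q$. No new geometric or combinatorial ideas are required beyond those already encapsulated in Theorem \ref{th1} and Proposition \ref{prop3}.
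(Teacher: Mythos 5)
Your proposal is correct and follows essentially the same route as the paper: apply Theorem \ref{th1} to get an $[n_q,n_q-3]_q2$ code with $n_q\le\Upsilon(q)$, feed it into Proposition \ref{prop3} with $N=r-1=2t-2$, and check the condition $q+1\ge 2n_q$ (which also gives $n_q<q$) holds for $q\ge 79$. The paper's proof is just a terser version of this, asserting the threshold $q\ge79$ without the explicit numerical verification you outline.
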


\begin{proof}
By Theorem \ref{th1}, in $\PG(2,q)$ there is a saturating set with size $n_q=\Upsilon(q)$.
 From the corresponding $[n_{q},n_{q}-3]_{q}2$
code,
one can obtain an $[n,n-r]_{q}2$ codes with parameters as in Proposition \ref{prop3}. The condition $q+1\geq 2n_q$ holds for $q\ge79$.
\end{proof}

Surveys of the known $[n,n-r]_{q}2$ codes and saturating sets
in $\PG(N,q)$ with $N$ even can be found in \cite{DavCovRad2,DGMP-AMC,Giul2013Survey}.
In many cases bounds (\ref{eq3_SatSpace}), \eqref{eq3_SatSpace2} is better
than the known ones.

\end{document}